\theoremstyle{plain}
\newtheorem*{acknowledgement}{Acknowledgement}
\newtheorem{definition}{\bf Definition}
\newtheorem{proposition}{\bf Proposition}
\newtheorem{remark}{Remark}
\newtheorem{theorem}{\bf Theorem}
\newcommand{\Limsup}{\operatorname*{Lim\,sup}}
\theoremstyle{definition}
\newtheorem{example}{\bf Example}
\numberwithin{equation}{section}
\def\tilde{\widetilde}
\def\dom{{m dom}\,}
\def\min{\mbox{\rm minimize}}
\def\B{\mathbb B}
\def\ox{\overline{x}}
\def\oz{\overline{z}}
\def\disp{\displaystyle}
\def\tto{\rightrightarrows}
\def\Tilde{\widetilde}
\def\Bar{\overline}
\def\epsilon{\varepsilon}
\def\ox{\bar{x}}
\def\oz{\bar{z}}
\def\cone{\mbox{\rm cone}\,}
\def\dom{\mbox{\rm dom}\,}
\def\dn{\downarrow}
\def\O{\Omega}
\def\ph{\varphi}
\def\lm{\lambda}
\def\gg{\gamma}
\def\dd{\delta}
\def \N{{\rm I\!N}}
\def\th{\theta}
\def\Limsup{\mathop{{\rm Lim}\,{\rm sup}}}
\def\Limsup{\mathop{{\rm Lim}\,{\rm sup}}}
\title[Article Title]{Regularized Multiobjective Optimization\\ with Directionally Lipschitzian Data}
\author{G. C. Bento$^\ast$}
\author{J. X. Cruz Neto}
\author{J. O. Lopes}
\author{B. S. Mordukhovich}
\author{P. R. Silva Filho}
\address[G. C. Bento]{Institute of Mathematics and Statistics, IME, Federal University of Goi\'as, Goi\^ania, Goi\'as, Brazil}
\email{glaydston@ufg.br}
\address[J. X. Cruz Neto]{Departamento de Matem\'{a}tica, CCN, Federal University of Piau\'{\i}\\
 Te\-re\-si\-na, Piau\'{\i}, Brazil.}
\email{jxavier@ufpi.edu.br}
\address[J. O. Lopes]{Departamento de Matem\'{a}tica, CCN, Federal University of Piau\'{\i}\\
 Te\-re\-si\-na, Piau\'{\i}, Brazil.}
\email{jurandir@ufpi.edu.br}
\address[B. S. Mordukhovich]{Department of Mathematics and Institute for AI and Data Science, Wayne State University, Detroit, Michigan, United States}
\email{boris@math.wayne.edu}
\address[P. R. Silva Filho]{Departamento de Matem\'{a}tica, CCN, Federal University of Piau\'{\i}\\
 Te\-re\-si\-na, Piau\'{\i}, Brazil.}
\email{pedrorodrigues@ufpi.edu.br}
\begin{document}

\newcommand{\spacing}[1]{\renewcommand{\baselinestretch}{#1}\large\normalsize}
\spacing{1.2}

\begin{abstract}

The paper is devoted to the study of regularized versions of multiobjective optimization problems described by directionally Lipschitzian functions. Such regularizations appear in proximal-type algorithms of multiobjective optimization, various models of machine learning, medical physics, etc. We investigate and illustrate several useful properties of directionally Lipschitzian functions, which distinguish them from locally Lipschitzian ones. By using advanced tools of variational analysis and generalized differentiation revolving around the limiting/Mordukhovich subdifferential, we derive necessary conditions for Pareto optimality in regularized multiobjective problems.\\[1ex]
{\em Keywords}: multiobjective optimization, regularizations, Pareto optimality, directionally Lipschitzian functions, variatiional analysis and generalized differentiation, necessary optimality conditions\\[1ex]
{\em Mathematics Subject Classification} (2000): 90C29, 90B50, 49J52, 49J53, 49K35, 
\end{abstract}

\maketitle

\section{Introduction}\label{intro}

Multiobjective optimization is an area of decision-making with multiple criteria that is dedicated to simultaneously optimizing problems involving several objective functions. Since there exists no single point that can minimize all the objective functions at the same time, various notions of vector optimality have been introduced and studied in the literature. Most of them revolve around {\em Pareto optimality/efficiency}, which is often replaced by the {\em weak Pareto} notion that is more convenient for mathematical methods. Multiobjective optimization has a variety of applications to numerous fields of science and technology including engineering, economics, statistics, industry, agriculture, artificial intelligence, medical physics, etc. Among great many publications devoted to various versions of multiobjective optimization, its extensions and applications, we refer the reader to, e.g., \cite{bao,proton,gal1997,jahn2009vector,ktz,kobis,Mordukh,mordukhovich2018variational,white1990} and the bibliographies therein.

Advanced trends in optimization theory and applications, particularly to models of machine learning, statistics, and data science, consist of adding {\em regularizing terms} to minimizing (scalar or vector) costs. Such terms are proved to be crucial in the design and justification of {\em fast algorithms} in optimization and applied modeling. To this end, we mention the {\em proximal point algorithm} \cite{Rockafellar1976}, as well as its vectorial \cite{Bonnel2005,chuong2011hybrid} and various multiobjective \cite{bento2024refined,Bento2018,bento2022pareto,bento2018proximal} extensions, with the {\em proximal} (squared norm) regularization, the {\em Lasso regression model} in statistics and machine learning \cite{lasso} with the {\em $\ell^1$-norm} regularization, and the most recent {\em proton therapy} multiobjective model in cancer research with the {\em $\ell^0$-norm} regularization \cite{proton}.  

Most of the known multiobjective models in optimization theory concerned problems with smooth, convex, and/or Lipschitz continuous data and addressed deriving necessary optimality conditions for {\em weak Pareto} minimizers. The first optimality conditions for weak Pareto solutions to Lipschitzian multiobjective problems appeared in \cite{clarke,minami} being expressed in terms of Clarke's {\em generalized gradient}. 

In this paper, we consider multiobjective optimization problems defined by {\em directionally Lipschitzian} functions. Our main attention here is directed not towards general multiobjective models but towards their {\em regularizations}. For definiteness and simplicity, our study in this paper is confined to the {\em proximal regularization}, while the two other types of regularizations mentioned above can also be treated by using our variational techniques. In this way, we derive necessary optimality conditions for {\em Pareto} (not just weak Pareto) minimizers of the {\em regularized multiobjective} models in terms of Mordukhovich's {\em limiting subdifferential}, which is essentially smaller than its Clarke's counterpart and enjoys comprehensive {\em calculus rules}. 

The rest of the paper is organized as follows. Section~\ref{prel} presents some preliminaries from variational analysis and generalized differentiation used below. In Section~\ref{sec:dir-lip}, we study the class of directionally Lipschitzian functions with deriving their useful properties and constructing instructive examples. Section~\ref{secMultiobectiveoptmin} formulates and discusses regularized multiobjective problems with directionally Lipschitzian data and establishes some properties of their Pareto optimal solutions. In Section~\ref{sec3}, we obtain necessary conditions for Pareto minimizers in regularized multiobjective optimization expressed in terms of limiting subgradients. The final Section~\ref{conc} resumes the main achievements of the paper and discusses some directions of our future research and perspectives for applications.

\section{Generalized Differentiation in Variational Analysis}\label{prel}

This section presents some notions and results of variational analysis and generalized differentiation, which are mostly taken from the books \cite{Morduk,mordukhovich2018variational,rw} and play a crucial role in establishing the main achievements of this paper. 

Let $f:\mathbb{R}^n\to\Bar{\mathbb{R}}:=(-\infty,\infty]$ be an extended-real-valued lower semicontinuous (l.s.c.) function. For a sequence $\{u^k\}\subset \mathbb{R}^n$, we use the notation $u^k\stackrel{f}{\to} \bar{u}$ to signify that $u^k\to \bar{u}$ with $f(u^k)\to f(\bar{u})$. The {\em regular/Fr\'echet subdifferential} of $f$ at $\bar{x}\in\mbox{dom}f:=\{x\in\mathbb{R}^n\;|\;f(x)<\infty\}$ is defined by
\begin{equation}\label{reg-sub}
\partial^F f(\overline{x}):=\bigg\{v\in \mathbb{R}^n\;\bigg|\;\liminf_{x\to\overline{x}}\dfrac{f(x)-f(\overline{x})-\langle v,x-\overline{x}\rangle}{\|x-\overline{x}\|}\geq 0\bigg\},
\end{equation} 
while the {\em limiting/Mordukhovich subdifferential} of $f$ at $\bar{x}\in \mbox{dom}f$ is 
\begin{equation}\label{lim-sub}
\partial^Mf(\bar{x})=\Limsup_{x\stackrel{f}{\to} \bar{x}}\partial^Ff(x)=\left\{v\in \mathbb{R}^n\Big|\;\exists\, x^k\stackrel{f}{\to} \bar{x} ,\,v^k\in\partial^Ff(x^k),\,v^k\to v\right\},
\end{equation}
where ``Limsup" denotes the (Painlev\'e-Kuratowski) {\em outer limit} of a set-valued mapping/multifunction $G\colon\mathbb{R}^n\tto\mathbb{R}^m$ as $z\to\ox$ given by 
\begin{equation}\label{pk}
\Limsup_{z\to\oz} G(z):=\big\{y\in\mathbb{R}^m\;\big|\;\exists\,z^k\to\oz,\;y^k\to y\;\mbox{ with }\;y^k\in G(z^k)\big\}.
\end{equation}
It is well known that if $f$ is (Fr\'echet) differentiable at $\bar{x} \in \mbox{dom}f$, then its regular subdifferential at $\ox$ is a singleton $\partial^Ff(\bar{x}) = \{\nabla f(\bar x)\}$. If $f$ is continuously differentiable around $\bar{x} \in \mbox{dom}f$ (or merely strictly differentiable at this point), then its limiting subdifferential reduces to the same singleton. In contrast to \eqref{reg-sub}, the limiting subdifferential \eqref{lim-sub}, while being nonconvex-valued, enjoys {\em full calculus} based on {\em variational/extremal principles} of variational analysis. In particular, we have the following fundamental {\em sum rule} 
(see, e.g., \cite[Corollary~2.21]{mordukhovich2018variational}), where $\partial^\infty f(\ox)$ stands for the {\em singular/horizon subdifferential} of $f$ at $\ox\in\dom f$ defined by
\begin{equation}\label{sin-sub}
\partial^{\infty}f(\bar{x}):=\big\{v\in \mathbb{R}^n\;
\big|\;\exists\, x^k\stackrel{f}{\to} \bar{x}, \, \lm_k\downarrow 0,\,v^k\in\partial^Ff(x^k),\,\lm_kv^k\to v\big\}.
\end{equation}

\begin{proposition}\label{cor:subgradients_sum} Let $f_i\colon\mathbb{R}^n\to \Bar{\mathbb{R}}$ be extended-real-valued functions that are l.s.c.\ around some point $\ox\in\dom f_i$, $i=1,\ldots,m$. Impose the qualification condition
\begin{equation}\label{eq:qualification_condition}
\big[v_1 + \ldots + v_m = 0,\,  v_i \in \partial^\infty f_i (\bar{x})\big] \Longrightarrow\big[v_i = 0\;\mbox{ for all }\;i=1,\ldots,m\big]
\end{equation}
expressed in terms of the singular subdifferential \eqref{sin-sub}. Then we have
\begin{equation}\label{eq:subdifferential_sum}
\partial (f_1 + \ldots + f_m)(\bar{x}) \subset \partial f_1(\bar{x}) + \ldots + \partial f_m(\bar{x}).
\end{equation}
\end{proposition}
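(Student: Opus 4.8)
The plan is to reduce the general assertion to the two-function case $m=2$ and then run a limiting procedure built on the fuzzy (approximate) sum rule for the regular subdifferential. First I would argue by induction on $m$. The base case $m=1$ is trivial, and to pass from $m-1$ to $m$ I would apply the two-function rule to $g:=f_1+\ldots+f_{m-1}$ and $f_m$. This requires checking that the two-function qualification condition holds for the pair $(g,f_m)$, which in turn rests on the parallel inclusion $\partial^\infty g(\bar x)\subset\partial^\infty f_1(\bar x)+\ldots+\partial^\infty f_{m-1}(\bar x)$ for singular subgradients, established under the same hypothesis \eqref{eq:qualification_condition}. Granting that, any relation $w+v_m=0$ with $w\in\partial^\infty g(\bar x)$ and $v_m\in\partial^\infty f_m(\bar x)$ expands to $v_1+\ldots+v_m=0$ with $v_i\in\partial^\infty f_i(\bar x)$, so \eqref{eq:qualification_condition} forces every $v_i=0$; hence $w=v_m=0$ and the qualification condition propagates through the induction.

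For the core two-function case, I would take an arbitrary $v\in\partial(f_1+f_2)(\bar x)$ and unfold definition \eqref{lim-sub}: there exist sequences $x^k\stackrel{f_1+f_2}{\to}\bar x$ and $v^k\in\partial^F(f_1+f_2)(x^k)$ with $v^k\to v$. A preliminary observation is that lower semicontinuity of each $f_i$, combined with the convergence of the sum $f_1(x^k)+f_2(x^k)\to f_1(\bar x)+f_2(\bar x)$, forces the componentwise convergence $f_i(x^k)\to f_i(\bar x)$. The main tool is then the fuzzy sum rule for the regular subdifferential---itself a consequence of Ekeland's variational principle or the extremal principle---applied at each $x^k$ with tolerance $1/k$: it produces points $x_i^k\to\bar x$ with $f_i(x_i^k)\to f_i(\bar x)$ and regular subgradients $v_i^k\in\partial^F f_i(x_i^k)$ satisfying $\|v_1^k+v_2^k-v^k\|\le 1/k$.

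The decisive step is the passage to the limit, and here I expect the main obstacle. I would split into two cases according to whether the sequence $\{(v_1^k,v_2^k)\}$ is bounded. If it is bounded, I pass to a convergent subsequence $v_i^k\to v_i$; then $v_i\in\partial f_i(\bar x)$ by \eqref{lim-sub}, and letting $k\to\infty$ in the estimate gives $v=v_1+v_2\in\partial f_1(\bar x)+\partial f_2(\bar x)$. The genuinely delicate case is when $t_k:=\|v_1^k\|+\|v_2^k\|\to\infty$ along a subsequence. Here I would normalize by setting $\lm_k:=1/t_k\dn 0$ and pass to a subsequence along which $\lm_k v_i^k\to w_i$; then $\|w_1\|+\|w_2\|=1$, so not both $w_i$ vanish, while $w_i\in\partial^\infty f_i(\bar x)$ by definition \eqref{sin-sub}. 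Since $\lm_k(v_1^k+v_2^k)=\lm_k v^k+\lm_k(v_1^k+v_2^k-v^k)\to 0$, we obtain $w_1+w_2=0$, contradicting the qualification condition \eqref{eq:qualification_condition}. This rules out the unbounded case and yields the desired inclusion \eqref{eq:subdifferential_sum}.

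The principal difficulty, and the part that must be handled with care, is precisely this dichotomy: showing that the qualification condition \eqref{eq:qualification_condition} is exactly what prevents the regular subgradients produced by the fuzzy sum rule from escaping to infinity, thereby guaranteeing boundedness and the existence of finite limits. Everything else---the induction, the unfolding of the limiting definition, and the extraction of convergent subsequences---is routine once the fuzzy sum rule is available.
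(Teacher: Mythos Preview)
The paper does not prove this proposition at all: it is stated in the preliminaries section with a direct citation to \cite[Corollary~2.21]{mordukhovich2018variational}, and no argument is given. So there is no ``paper's own proof'' to compare against.

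Your sketch is correct and is, in fact, essentially the standard proof one finds in the cited reference and in \cite{rw}: induction to reduce to $m=2$ (using the companion singular-subdifferential sum rule under the same qualification condition), the fuzzy/approximate sum rule for regular subgradients applied along an approximating sequence, and the bounded/unbounded dichotomy in which \eqref{eq:qualification_condition} rules out escape to infinity. The only point worth tightening is that the induction step relies on the inclusion $\partial^\infty(f_1+\ldots+f_{m-1})(\bar x)\subset\partial^\infty f_1(\bar x)+\ldots+\partial^\infty f_{m-1}(\bar x)$, which you invoke but do not justify; it is proved by the same fuzzy-sum-rule-plus-normalization argument and is part of the same theorem in the cited sources, so in a self-contained write-up you would want to establish both inclusions simultaneously in the two-function base case.
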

It is well known (see, e.g., \cite[Theorem~4.15]{mordukhovich2018variational})
that the condition $\partial^\infty f(\ox)=\{0\}$ is a complete {\em characterization} of the {\em local Lipschitzian} property of an l.s.c.\ function $f$ around $\ox$. Thus the qualification condition \eqref{eq:qualification_condition} is satisfied if all but one of the functions $f_i$ are Lipschitz continuous around $\ox$. Note that the singular subdifferential mapping \eqref{sin-sub} shares full calculus with the limiting one \eqref{lim-sub}. 

Another important property of the limiting subgradient mapping is its {\em robustness} by which we mean the following.

\begin{proposition}\label{weakgraph}
Let $f\colon\mathbb{R}^n\to \Bar{\mathbb{R}}$ be l.s.c. function around $\ox\in\dom f$, and let $\{x^k\},\{\omega^k\}\subset \mathbb{R}^n$ be sequences such that $\omega^k\in\partial^Mf(x^k)$ for all $k \in \mathbb{N}:=\{1,2,\ldots\}$ with $x^k\to x$, $\omega^k\to\omega$, and $f(x^k)\to f(x)$ as $k\to\infty$. Then we have the inclusion $\omega\in\partial^Mf(x)$.
\end{proposition}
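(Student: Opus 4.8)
The plan is to prove the closedness of the graph of $\partial^M f$ under $f$-attentive convergence by unfolding the definition \eqref{lim-sub} one level and running a diagonal argument. First I would fix the given sequences $x^k\to x$, $\omega^k\to\omega$ with $\omega^k\in\partial^Mf(x^k)$ and $f(x^k)\to f(x)$. For each fixed $k$, the membership $\omega^k\in\partial^Mf(x^k)$ means, by the very definition \eqref{lim-sub} of the limiting subdifferential as an outer limit of regular subgradients, that there exist sequences $x^{k,j}\stackrel{f}{\to}x^k$ and $v^{k,j}\in\partial^Ff(x^{k,j})$ with $v^{k,j}\to\omega^k$ as $j\to\infty$. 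Thus for every $k$ we have three simultaneous convergences in $j$, namely $x^{k,j}\to x^k$, $f(x^{k,j})\to f(x^k)$, and $v^{k,j}\to\omega^k$.

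The second step is the diagonal extraction. For each $k$ I would choose an index $j(k)$ large enough that the three estimates $\|x^{k,j(k)}-x^k\|<1/k$, $|f(x^{k,j(k)})-f(x^k)|<1/k$, and $\|v^{k,j(k)}-\omega^k\|<1/k$ hold simultaneously; this is possible since the three convergences above take place as $j\to\infty$ for the fixed $k$. Setting $y^k:=x^{k,j(k)}$ and $u^k:=v^{k,j(k)}$, we retain $u^k\in\partial^Ff(y^k)$ for all $k$, and in particular each $y^k$ lies in $\dom f$ since $\partial^Ff$ is defined only at points of the domain.

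The third step is to verify the three convergences required in \eqref{lim-sub} for the diagonal sequence, using the triangle inequality together with the hypotheses $x^k\to x$, $\omega^k\to\omega$, and $f(x^k)\to f(x)$. Indeed, $\|y^k-x\|\le\|y^k-x^k\|+\|x^k-x\|<1/k+\|x^k-x\|\to 0$; similarly $|f(y^k)-f(x)|\le|f(y^k)-f(x^k)|+|f(x^k)-f(x)|\to 0$; and $\|u^k-\omega\|\le\|u^k-\omega^k\|+\|\omega^k-\omega\|\to 0$. Hence $y^k\stackrel{f}{\to}x$, $u^k\in\partial^Ff(y^k)$, and $u^k\to\omega$, so by the definition \eqref{lim-sub} we conclude that $\omega\in\partial^Mf(x)$, which completes the argument.

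I expect the only delicate point to be the bookkeeping in the diagonal step: one must control the function-value convergence $f(y^k)\to f(x)$ on equal footing with the point and subgradient convergences, since it is precisely the $f$-attentive convergence $\stackrel{f}{\to}$ (rather than plain convergence) that is required both in the inner definition of $\partial^Mf(x^k)$ and in the final membership $\omega\in\partial^Mf(x)$. Once the indices $j(k)$ are chosen so as to dominate all three tails simultaneously, no further difficulty arises, and the standing lower semicontinuity of $f$ near $\ox$ ensures the whole construction remains meaningful.
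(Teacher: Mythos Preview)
Your proof is correct and is exactly the approach the paper indicates: it says only that the result ``follows directly from definition \eqref{lim-sub} and the diagonal process in finite dimensions,'' and you have simply written out that diagonal argument in full, controlling the $f$-attentive convergence alongside the point and subgradient convergences.
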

The proof of this result follows directly from definition \eqref{lim-sub} and the diagonal process in finite dimensions. The robustness property also holds for the singular subdifferential \eqref{sin-sub} while fails to the regular subgradient mapping \eqref{reg-sub} even in the simplest case of $f(x):=-|x|$ with $\ox=0\in\mathbb{R}$.

The results obtained below via the limiting subdifferential \eqref{lim-sub} for Pareto minimizers have serious advantages in comparison with known in the literature even for the case of weak Pareto solutions, where the Clarke's generalized gradients of locally Lipschitzian functions are used instead of \eqref{lim-sub}. Given a function $f\colon\mathbb{R}^n\to\mathbb{R}$ that is Lipschitz continuous around $\ox$, recall first the (Clarke) {\em generalized directional derivative} of $f$ at $\bar{x}$ in the direction of $d$ defined by
\begin{align}
\nonumber f^{\circ}(\bar{x},d)=\limsup_{{x \to \bar{x}} \atop {t \downarrow 0 }} \dfrac{f(x+td)-f(x)}{t}.
\end{align}
Then (Clarke's) {\em generalized gradient} of $f$ at $\bar{x}$ is
\begin{align}\label{cl}
\partial^Cf(\bar{x})=\big\{v\in\mathbb{R}^n\;\big|\;\langle v,d\rangle\leq 
 f^{\circ}(\bar{x}, d)\;\mbox{ for all }\; d \in \mathbb{R}^n\big\}.
\end{align}
We have the following relationship between the generalized gradient and limiting subdifferentials of locally Lipschitzian functions (see \cite[Theorem~3.57]{Morduk}):
\begin{equation*}\label{clco}
\partial^C f(\bar{x})={\rm clco}\,\partial^M f(\bar{x}),
\end{equation*}
where ``clco" stands for the weak closure of the convex hull of the set. Note that, while the generalized gradient \eqref{cl} of locally Lipschitzian functions is robust, this property fails in more general settings.

Finally in this section, recall that the {\em distance function} $d_{\Omega}(x)$ associated with a nonempty set $\O\subset \mathbb{R}^n$ is defined by
\begin{equation}\label{dist}
d_{\Omega}(x):=\inf\big\{\|x-y\|\;\big|\;\;y \in \Omega\big\},\quad x\in \mathbb{R}^n.
\end{equation}
It is well-known that the distance function \eqref{dist} is globally Lipschitzian on $\mathbb{R}^n$ with Lipschitz constant one. If $\O$ is closed around $\ox\in\O$, then the limiting subdifferential of \eqref{dist} is calculated by (see \cite[Theorem~1.33(i)]{mordukhovich2018variational})
\begin{equation}\label{sub-dist}
\partial^Md_\O(\ox)=N^M_\O(\ox)\cap\B,
\end{equation}
where $\B$ is the closed unit ball of $\mathbb{R}^n$, and where the (Mordukhovich) {\em limiting normal cone} is defined by 
\begin{equation}\label{nor}
N^M_\O(\ox):=\Limsup_{x\to\ox}\big[\cone\big(x-\Pi_\O(x)\big)\big]
\end{equation}
via the outer limit \eqref{pk} involving the Euclidean projector $\Pi_\O$ associated with the set  $\O$. The notation ``cone" stands in \eqref{nor} for the (nonconvex) conic hall of a set.

\section{Directionally Lipschitzian Functions}\label{sec:dir-lip}

Now we define, following Rockafellar \cite{Roc79},
a class of extended-real-valued functions that extending the local Lipschitz continuity; see \cite{burke2021,Roc79,rw} for more details.

\begin{definition}\label{dir-lip} Let $f:\mathbb{R}^n\to\overline{\mathbb{R}}$ be l.s.c. function around $\ox\in\dom f$. We say that $f$ is {\sc directionally Lipschitzian} at $\ox\in\dom f$  if there is a unit vector $u\in \mathbb{R}^n$ with
\begin{eqnarray}\label{dir-lip1}
\displaystyle\limsup_{\substack{x\stackrel{f}{\to} \overline{x} \\ v\to u \\ t\downarrow 0}}\frac{f(x+tv)-f(x)}{t}<\infty.
\end{eqnarray}
The function \( f \) is directionally Lipschitzian on $\mathbb{R}^n$ if it satisfies \eqref{dir-lip1} at every point in \( \mathbb{R}^n \).  Furthermore, a {\sc vector mapping} \( F: \mathbb{R}^n \to \mathbb{R}^m \) is directionally Lipschitzian at $\ox$ $($resp. on $\mathbb{R}^n)$ if each of its components \( f_i: \mathbb{R}^n \to \mathbb{R} \), $i=1,\ldots,m$, enjoys the corresponding property.
\end{definition}

In what follows, we reveal some useful assertions concerning directionally Lipschitzian functions. Let us start with providing an example of a real function on $\mathbb{R}^n$,  which is continuous and directionally Lipschitzian while  not locally Lipschitzian around the reference point.

\begin{example}\label{ex1} Consider the function $f:\mathbb{R}^n \rightarrow \mathbb{R}$ defined by 
\begin{equation}\label{f-ex1}
f(x):=\sum_{i=1}^n\big({x_i}\big)^\frac{1}{3},\quad x=(x_1,\ldots,x_n)\in\mathbb{R}^n.
\end{equation}
This function is obviously continuous but not locally Lipschitzian around $\ox=0$.  We now show that \eqref{f-ex1} is directionally Lipschitzian at this point. Fix the vector
\[
u = \left(-\frac{1}{\sqrt{n}}, \ldots, -\frac{1}{\sqrt{n}}\right).
\]
and take \( v = (v_1, \ldots, v_n) \in \mathbb{R}^n \) such that \( v \to u \). Then we have:
\begin{align*}
\frac{f(x+tv) - f(x)}{t} &= \sum_{i=1}^n \left[ 
\frac{(x_i + tv_i)^{\frac{1}{3}} - x_i^{\frac{1}{3}}}{t}\cdot 
\frac{(x_i + tv_i)^{\frac{2}{3}} + (x_i + tv_i)^{\frac{1}{3}} x_i^{\frac{1}{3}} + x_i^{\frac{2}{3}}}{(x_i + tv_i)^{\frac{2}{3}} + (x_i + tv_i)^{\frac{1}{3}} x_i^{\frac{1}{3}} + x_i^{\frac{2}{3}}} 
\right] \\
&=\sum_{i=1}^n \left[
\frac{tv_i}{t}\cdot\frac{1}{(x_i + tv_i)^{\frac{2}{3}} + (x_i + tv_i)^{\frac{1}{3}} x_i^{\frac{1}{3}} + x_i^{\frac{2}{3}}}
\right] \\
&= \sum_{i=1}^n \left[
\frac{v_i}{(x_i + tv_i)^{\frac{2}{3}} + (x_i + tv_i)^{\frac{1}{3}} x_i^{\frac{1}{3}} + x_i^{\frac{2}{3}}}
\right].
\end{align*}
Letting \( y_i: = (x_i + tv_i)^{\frac{1}{3}} \) leads us to the quadratic equation
\[
y_i^2 + x_i^{\frac{1}{3}}y_i + x_i^{\frac{2}{3}} = 0\;\mbox{ for each }\;i=1,\ldots,n.
\]
Computing the discriminant for each \( i \) tells us that
\[
\Delta_i = x_i^{\frac{2}{3}} - 4x_i^{\frac{2}{3}} = -3x_i^{\frac{2}{3}} \leq 0,
\]
which ensures in turn that, whenever \( i = 1, \dots, n \), we get
\[
(x_i + tv_i)^{\frac{2}{3}} + (x_i + tv_i)^{\frac{1}{3}} x_i^{\frac{1}{3}} + x_i^{\frac{2}{3}} = y_i^2 + x_i^{\frac{1}{3}}y_i + x_i^{\frac{2}{3}} \geq 0.
\]
Therefore, for \( v_i \) sufficiently close to \( -\frac{1}{\sqrt{n}} \) and each \( i = 1, \dots, n \), it follows that
\[
\frac{f(x+tv) - f(x)}{t} = \sum_{i=1}^n \left[
\frac{v_i}{(x_i + tv_i)^{\frac{2}{3}} + (x_i + tv_i)^{\frac{1}{3}} x_i^{\frac{1}{3}} + x_i^{\frac{2}{3}}}
\right] \leq 0.
\]
The latter readily verifies that \( f \) in \eqref{f-ex1} is directionally Lipschitzian at $\ox$.
 \end{example}

The next example illustrates the statement of Proposition~\ref{cor:subgradients_sum} for the sum of a locally Lipschitzian and a directionally Lipschitzian functions when the qualification condition \eqref{eq:qualification_condition} is not trivial as in the fully Lipschitzian case while still holds and allows us to use the subdifferential sum rule in directionally Lipschitzian settings. 

\begin{example}\label{ex(H)} Consider the functions $f_1, f_2:\mathbb{R}\to\mathbb{R}$ given by
\[
f_1(x): = x^{\frac{1}{3}} \quad \text{and} \quad f_2(x): = \begin{cases}
\frac{1}{2}x \sin{\frac{1}{x}} & \text{if } x > 0, \\
-x & \text{if } x \leq 0.
\end{cases}
\]\noindent Since \( f_2 \) is locally Lipschitzian on $\mathbb{R}$ and \( f_1 \) is locally Lipschitzian for all $x\ne 0$, we get from the above that \( \partial^{\infty}f_2(x) = \{0\} \) for all \( x\in\mathbb{R} \) and \( \partial^{\infty}f_1(x) = \{0\} \) for all \( x\neq 0 \). On the other hand, it follows from definition \eqref{sin-sub} of the singular subdifferential that $ \partial^{\infty}f_1(0) = [0, \infty)$. It is easy to check that $f_2$ is directionally Lipschitzian at each point $x\in \mathbb{R}$ with the same unit vector as $f_1$. Furthermore, the qualifying condition \eqref{eq:qualification_condition} holds for these functions, and hence the subdifferential sum rule \eqref{eq:subdifferential_sum} is applicable in this case.
\end{example}
\begin{figure}[h]\label{fig1}
\centering
\begin{tikzpicture}
\begin{axis}[
axis lines=middle,
xlabel={$x$},
ylabel={$y$},
samples=100,
domain=-0.5:0.5,
ymin=-1.5, ymax=2.5,
width=10cm, height=7cm,
xtick={-2,-1,0,1,2},
ytick={-1,0,1,2},
legend pos=north west]
\addplot[blue, thick, domain=-2:2] {sign(x) * abs(x)^(1/3)};
\addplot[red, thick, domain=0.01:2] {0.5*x*sin(deg(1/x))};
\addplot[red, thick, domain=-2:0] {-x};
\end{axis}
\end{tikzpicture}
\caption{Graph of the functions $ f_1$ and $f_2$.}
\end{figure}
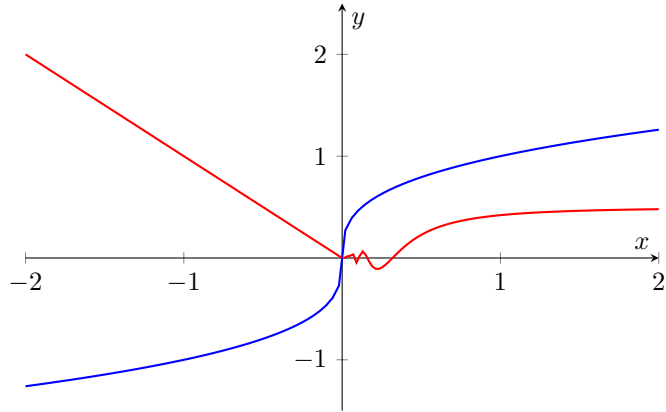

Below we present several properties of directionally Lipschitzian functions used in what follows. The first one is a  direct consequence of Definition~\ref{dir-lip}.

\begin{proposition}\label{constantdire}
If $f:\mathbb{R}^n\to\Bar{\mathbb{R}}$ is directionally Lipschitzian at $\ox\in\dom f$, then there exist $\delta>0$, a unit vector $u\in\mathbb{R}^n$, and a constant $L=L(\overline{x},u)$ dependent on $\ox$ and $u$ such that we have the estimate
\begin{align*}
\begin{array}{ll}
\qquad\qquad f(z+tv)-f(z)\leq t\cdot L(\overline{x},u)\\
\mbox{for all }\;v\in B_{\delta}(u), \; z\in B_{\delta}(\overline{x}), \;f(z)\in B_{\delta}(f(\overline{x})), \; t\in(0,\delta],
\end{array}
\end{align*}
where $B_r(\theta)$ denotes the open ball centered at a vector $\th$ with radius $r>0$.
\end{proposition}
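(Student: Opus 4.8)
The plan is to read the conclusion as nothing more than the quantitative $\delta$-$L$ reformulation of the finiteness asserted in Definition~\ref{dir-lip}, so the whole argument amounts to unwinding the joint limit superior in \eqref{dir-lip1}. Recall that for the difference quotient depending on the three arguments $x$ (subject to $x\stackrel{f}{\to}\ox$), $v\to u$, and $t\downarrow 0$, the limit superior in \eqref{dir-lip1} equals the nonincreasing limit
\[
M:=\lim_{\eta\downarrow 0}\,\sup\left\{\frac{f(x+tv)-f(x)}{t}\;\Big|\;\|x-\ox\|<\eta,\ |f(x)-f(\ox)|<\eta,\ \|v-u\|<\eta,\ 0<t<\eta\right\},
\]
where $u$ is the unit vector furnished by the hypothesis and $M<\infty$.

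First I would fix any real number $L=L(\ox,u)$ strictly greater than this value $M$ (take $L:=M+1$ when $M$ is finite, and any $L\in\mathbb{R}$ when $M=-\infty$); by construction $L$ depends only on $\ox$ and $u$, exactly as required. Since the supremum above is nonincreasing in $\eta$ and converges to $M<L$, there is some $\eta_0>0$ for which that supremum is strictly below $L$. Choosing $\delta:=\eta_0/2$ then guarantees that every tuple admissible in the conclusion---namely $z\in B_\delta(\ox)$, $f(z)\in B_\delta(f(\ox))$, $v\in B_\delta(u)$, and $t\in(0,\delta]$---satisfies the strict constraints $\|z-\ox\|<\eta_0$, $|f(z)-f(\ox)|<\eta_0$, $\|v-u\|<\eta_0$, and $0<t<\eta_0$, whence the difference quotient is bounded above by $L$. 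Multiplying through by $t>0$ yields $f(z+tv)-f(z)\le t\cdot L(\ox,u)$, which is the claimed estimate.

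The routine but delicate point is the bookkeeping of the simultaneous limit: the supremum defining $M$ is taken jointly over all four constraints at once, and one must check that a single neighborhood radius $\delta$ controls them all rather than handling the variables separately. The passage from the half-open range $0<t<\eta_0$ in the limsup to the closed endpoint $t\in(0,\delta]$ in the statement is precisely what the shrinkage $\delta=\eta_0/2$ repairs. I would also note that the requirement $f(z)\in B_\delta(f(\ox))$ automatically forces $z\in\dom f$, since $f(\ox)$ is finite and $f$ is $\Bar{\mathbb{R}}$-valued, so no separate domain hypothesis enters. No genuine obstacle arises beyond correctly interpreting the multivariable $\limsup$ of Definition~\ref{dir-lip}.
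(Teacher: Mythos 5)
Your argument is correct and is exactly the unwinding of the joint $\limsup$ in \eqref{dir-lip1} that the paper has in mind: the paper gives no explicit proof, stating only that the proposition is a direct consequence of Definition~\ref{dir-lip}, and your $\eta$--$\delta$ bookkeeping (including the shrinkage $\delta=\eta_0/2$ to accommodate the closed endpoint $t\in(0,\delta]$) supplies precisely the missing details. Nothing further is needed.
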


The next proposition establishes {\em calculus rules} for the directional Lipschitzian functions telling us that the directional Lipschitzian property is preserved under the function summation and taking the maximum.

\begin{proposition}\label{max} Let $\psi,\phi: \mathbb{R}^n \to \mathbb{R}$ be directionally Lipschitzian functions at $\ox$ with the same unit vector $u\in\mathbb{R}^n$. Then the following hold:

{\bf(i)} The sum function $\varphi(x): = \psi(x)+\phi(x)$, $x\in\mathbb{R}^n$, is directionally Lipschitzian at $\ox$ with the unit vector $u$.

{\bf(ii)} The maximum function $\varphi(x): = \max\{\psi(x),\phi(x)\}$, $x\in\mathbb{R}^n$, is directionally Lipschitzian at $\ox$ with the unit vector $u$.
\end{proposition}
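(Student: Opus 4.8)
The plan is to reduce both assertions to the uniform one-sided estimate supplied by Proposition~\ref{constantdire}, and then to exploit, respectively, the additivity of increments and the elementary subadditivity of the maximum operation, taking care that the value-convergence needed to invoke the directionally Lipschitzian estimates is available for each summand or competitor.

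For part (i), I would first apply Proposition~\ref{constantdire} to $\psi$ and to $\phi$ at $\ox$ with the common unit vector $u$. This yields radii $\delta_\psi,\delta_\phi>0$ and constants $L_\psi,L_\phi$ such that $\psi(z+tv)-\psi(z)\le tL_\psi$ and $\phi(z+tv)-\phi(z)\le tL_\phi$ whenever $v\in B_\delta(u)$, $z\in B_\delta(\ox)$, the value of the corresponding function at $z$ lies in the prescribed ball around its value at $\ox$, and $t\in(0,\delta]$, where $\delta:=\min\{\delta_\psi,\delta_\phi\}$. The key preliminary step is to verify that, along any sequence $x^k\stackrel{\varphi}{\to}\ox$, one automatically has $\psi(x^k)\to\psi(\ox)$ and $\phi(x^k)\to\phi(\ox)$ separately: lower semicontinuity gives $\liminf_k\psi(x^k)\ge\psi(\ox)$ and $\liminf_k\phi(x^k)\ge\phi(\ox)$, while $\psi(x^k)+\phi(x^k)\to\psi(\ox)+\phi(\ox)$ forces the matching upper bounds $\limsup_k\psi(x^k)\le\psi(\ox)$ and $\limsup_k\phi(x^k)\le\phi(\ox)$. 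Summing the two one-sided estimates then yields $\frac{\varphi(x^k+t_kv_k)-\varphi(x^k)}{t_k}\le L_\psi+L_\phi$ for all large $k$, so the $\limsup$ in Definition~\ref{dir-lip} is finite and $u$ witnesses the directionally Lipschitzian property of $\varphi$.

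For part (ii) I would rely on the elementary inequality $\max\{a',b'\}-\max\{a,b\}\le\max\{a'-a,\,b'-b\}$, valid for all reals, applied with $a'=\psi(x+tv)$, $a=\psi(x)$, $b'=\phi(x+tv)$, $b=\phi(x)$. This bounds the increment quotient of $\varphi$ by the larger of the increment quotients of $\psi$ and $\phi$; once each of the latter is eventually controlled by its constant from Proposition~\ref{constantdire}, one concludes that the $\limsup$ is at most $\max\{L_\psi,L_\phi\}<\infty$. As in part (i), this step requires the value-convergence $\psi(x^k)\to\psi(\ox)$ and $\phi(x^k)\to\phi(\ox)$ along $x^k\stackrel{\varphi}{\to}\ox$, which is what allows the balls in Proposition~\ref{constantdire} to be entered.

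The main obstacle is precisely this value-matching step for the maximum. For the sum it is automatic, as shown above, since no cancellation can occur between the two lower semicontinuous increments. For the maximum, when $\psi(\ox)=\phi(\ox)=\varphi(\ox)$ the same squeezing argument, using $\psi,\phi\le\varphi$ together with lower semicontinuity, again gives $\psi(x^k)\to\psi(\ox)$ and $\phi(x^k)\to\phi(\ox)$; the delicate configuration is $\psi(\ox)\ne\phi(\ox)$, where $\varphi(x^k)\to\varphi(\ox)$ pins down only the value of the larger function and leaves the smaller one free. I would resolve this by localization: if, say, $\psi(\ox)>\phi(\ox)$ and the data are continuous, as in Examples~\ref{ex1} and \ref{ex(H)} and as is natural for real-valued costs, then $\varphi=\psi$ on a neighborhood of $\ox$ and the directionally Lipschitzian property transfers verbatim from $\psi$, leaving only the two-sided case $\psi(\ox)=\phi(\ox)$ to the squeezing argument above.
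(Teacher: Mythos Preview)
Your approach is essentially the paper's: for (i) add the two one-sided estimates, and for (ii) invoke the inequality $\max\{a',b'\}-\max\{a,b\}\le\max\{a'-a,b'-b\}$ together with Proposition~\ref{constantdire}. The paper's proof of (i) is a single sentence appealing to the $\limsup$ definition, and its proof of (ii) writes exactly your maximum inequality and then bounds both increments by Proposition~\ref{constantdire}.

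Where you go beyond the paper is in flagging the value-convergence issue: the $\limsup$ in Definition~\ref{dir-lip} is taken over $x\stackrel{\varphi}{\to}\ox$, whereas Proposition~\ref{constantdire} for $\psi$ and $\phi$ requires $\psi(x)\in B_\delta(\psi(\ox))$ and $\phi(x)\in B_\delta(\phi(\ox))$. The paper simply writes the final $\limsup$ bound without checking that $x\stackrel{\varphi}{\to}\ox$ forces these componentwise value conditions. Your handling of this is correct for the sum (the l.s.c.\ plus sum-convergence squeeze is exactly the right argument) and for the equality case $\psi(\ox)=\phi(\ox)$ of the maximum (where $\psi,\phi\le\varphi$ provides the upper bound and l.s.c.\ the lower). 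Your continuity patch for the strict-inequality case is not part of the stated hypotheses, but the paper does not address this case either; you have in fact located a point the paper's argument leaves implicit. Since in the paper's applications (Theorem~\ref{minindirectionally}, Theorem~\ref{teoCO}) the functions are continuous or locally Lipschitz near the reference point, your localization is adequate for every use made of the proposition.
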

\begin{proof} To verify (i), take the common unit vector $u$ associated  with the functions $\psi$ and $\phi$. Then the directional Lipschitzian property of $\ph=\psi+\phi$ follows directly from \eqref{dir-lip1} due to the definition of the upper limit ``$\limsup$" therein. 

To prove (ii), observe the inequality
\begin{align}\label{eqmax}
\begin{array}{ll}
&\dfrac{\max\{\psi,\phi\}(x+tv)-\max\{\psi,\phi\}(x)}{t}\\\\
&\leq \dfrac{\max\{\psi(x+tv)-\psi(x),\phi(x+tv)-\phi(x)\}}{t}.
\end{array}
\end{align}
It follows from Proposition~\ref{constantdire} that \ there exist $\delta >0$ and $L(\overline{x},u)$ such that we get
\begin{align} \label{eqmax1}
\psi(x+tv)-\psi(x)\leq t\cdot L(\overline{x},u)~\mbox{ and }~\phi(x+tv)-\phi(x)\leq t\cdot L(\overline{x},u)
\end{align}
for all $v\in B_{\delta}(u), \; z\in B_{\delta}(\overline{x}), \;\psi(x)\in B_{\delta}(f(\overline{x})), \phi(x)\in B_{\delta}(f(\overline{x}))$, and  $t\in(0,\delta]$.
Combining \eqref{eqmax} and \eqref{eqmax1} gives us
$$
\displaystyle\limsup_{\substack{x\stackrel{\varphi}{\to} \overline{x} \\ v\to u \\ t\downarrow 0}}\bigg[\frac{\varphi(x+tv)-\varphi(x)}{t}\bigg]\leq L(\overline{x},u)<\infty,
$$
which therefore verifies the claimed directional Lipschitzian property by \eqref{dir-lip1}.
\end{proof}

\section{Regularized Models of Multiobjective Optimization}  \label{secMultiobectiveoptmin}

We start with recalling the notion {\em Pareto optimality/efficiency} in constrained problems of multiobjective optimization. 

\begin{definition}\label{pareto} Given a nonempty set $\Omega\subset \mathbb{R}^n$ and a vector-valued mapping $F=(f_{1},\ldots,f_{m}):\mathbb{R}^n\to \mathbb{R}^m$, we say that $\ox\in\O$ is a {\sc Pareto solution/minimizer} of the constrained multiobjective problem labeled as
\begin{equation}\label{eq:mp}
{\rm min}\big\{F(x)\;\big|\;x\in \Omega\big\}
\end{equation}
if there exists no point $x\in \Omega$ with $f_i(x)\leq f_i(\ox)$ for all
 $i\in \{1,\ldots,m\}$ and $f_j(x)<f_j(\ox)$ for at least one index $j \in \{1,\ldots,m\}$.
The notation 
$$
{\rm argmin}\big\{ F(x)\;\big|\;x\in \Omega\}
$$
stands for the collection of all Pareto optimal solutions to problem \eqref{eq:mp}. 
\end{definition}

The {\em local} Pareto solution of \eqref{eq:mp} is defined similarly, while we'll proceed with the study of (global) Pareto solutions for simplicity. Recall for comparison that the {\em weak Pareto} optimality in \eqref{eq:mp} means that there is no $x\in\O$ such that $f_i(x)<f_i(\ox)$ for all $i=1,\ldots,m$. We are not going to deal with the latter notion in this paper.

In fact, our main interest is devoted not to problem \eqref{eq:mp}, but its {\em regularizations}; see the discussion in Section~\ref{intro}. In this paper, we concentrate on the study of Pareto solutions to the {\em proximal-type regularization} of \eqref{eq:mp} written as
\begin{equation}\label{eq:mp1}
\mbox{min}\{\Psi(x):=F(x)+\lambda\|x-{\tilde{x}}\|^2\Upsilon\;\big|\;x\in \mathcal{D}\big\},
\end{equation}
where $\mathcal{D}:=\big\{x\ \in \Omega\;\big|\;\Phi(x):=F(x)-F({\tilde{x}})\le 0\}$ with the vector inequality understood componentwise, where $\tilde{x}\in \Omega$, $\lambda>0$, and $\Upsilon=(\upsilon_1,\ldots,\upsilon_m)\in\mathbb{R}^m$ is a unit vector with all the positive components. To the best of our knowledge, problem \eqref{eq:mp1} has its origins in \cite{Tikhonov1963} with stabilizing ill-posed problems (what is now known as the {\em Tikhonov regularization}), and in \cite{Moreau1965} with the {\em Moreau envelope} (or {\em Moreau-Yosida regularization}), which was subsequently used by Rockafellar to formalize the {\em proximal point algorithm} (PPA)  \cite{Rockafellar1976}. In the vector context, it was further developed in \cite{Bonnel2005} formalizing the vectorial PPA; see also \cite{bento2024refined,Bento2018,chuong2011hybrid} and the references therein for more recent developments in this direction.

In what follows, we study Pareto optimality in the regularized problem \eqref{eq:mp1}. Let us first verify the possibility of the {\em exact penalization} of geometric constraints in minimizing {\em directionally Lipschitzian} cost functions. The result below of its own interest is an extended directionally Lipschitzian counterpart of \cite[Proposition~2.4.3]{clarke} obtained therein for constrained optimization problems with Lipschitzian costs. 

\begin{theorem}\label{minindirectionally}
Let $f:\mathbb{R}^n\to {\mathbb{R}}$ be a directionally  1Lipschitzian and continuous function, and $\O\subset\mathbb{R}^n$ a nonempty closed set. If $\ox\in\O$ minimizes $f$ over $\O$, then there exist a unit vector $u\in\mathbb{R}^n$ and a constant $L=L(\ox,u)$ from Proposition~{\rm\ref{constantdire}} such that $\ox$ is a local minimizer of the penalized function $f+\tau d_{\Omega}$ whenever $\tau\geq L(\ox,u)$.
\end{theorem}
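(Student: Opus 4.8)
The plan is to reduce the penalized inequality to the single distinguished direction $u$ supplied by Proposition~\ref{constantdire} and then to transport values of $f$ from points outside $\O$ back into $\O$ along that direction. First I would fix, via Proposition~\ref{constantdire}, the radius $\delta>0$, the unit vector $u$, and the constant $L=L(\ox,u)$ for which $f(z+tv)-f(z)\le t\,L$ holds for all $v\in B_\delta(u)$, all $z\in B_\delta(\ox)$ with $f(z)\in B_\delta(f(\ox))$, and all $t\in(0,\delta]$. It then suffices to prove that $f(x)+\tau d_\O(x)\ge f(\ox)$ for every $x$ in a suitably small neighborhood of $\ox$ and every $\tau\ge L$. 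When $x\in\O$ this is immediate, since $d_\O(x)=0$ and $f(x)\ge f(\ox)$ because $\ox$ minimizes $f$ over $\O$; so the entire content lies in the case $x\notin\O$.

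For $x\notin\O$ close to $\ox$, since $\O$ is closed I would pick a projection $y\in\Pi_\O(x)$, so that $\|x-y\|=d_\O(x)>0$ and $y\to\ox$ as $x\to\ox$ (because $\|y-x\|\le\|\ox-x\|$). Minimality of $\ox$ over $\O$ gives $f(y)\ge f(\ox)$, and therefore the desired inequality would follow from the single key estimate $f(y)-f(x)\le L\,d_\O(x)$: chaining it with $f(\ox)\le f(y)$ and $L\le\tau$ yields $f(\ox)\le f(y)\le f(x)+L\,d_\O(x)\le f(x)+\tau d_\O(x)$. To produce this estimate I would introduce the inward direction $w:=(y-x)/\|y-x\|$ and set $t:=\|y-x\|=d_\O(x)$, so that $y=x+tw$, and then invoke Proposition~\ref{constantdire} with $z:=x$ and $v:=w$: continuity of $f$ guarantees $f(x)\in B_\delta(f(\ox))$ once $x$ is close enough to $\ox$, while $x\in B_\delta(\ox)$ and $t\in(0,\delta]$ hold for $x$ near $\ox$, and the estimate $f(x+tw)-f(x)\le tL$ is precisely $f(y)-f(x)\le L\,d_\O(x)$.

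The one hypothesis of Proposition~\ref{constantdire} that is not automatic is the requirement $w\in B_\delta(u)$, and this is exactly where I expect the main difficulty to lie. In Clarke's fully Lipschitzian setting \cite[Proposition~2.4.3]{clarke} the point never arises, because the increment bound is available in every direction; here, by contrast, $f$ is controlled only along directions close to the single vector $u$, whereas $w$ is the direction pointing from $x$ toward its nearest point of $\O$, over which there is no a~priori control. Thus the crux of the argument is to show that, as $x\to\ox$ outside $\O$, the inward projection directions $w$ cluster around $u$ — equivalently, that $-u$ governs the outward behavior of $\O$ at $\ox$ relevant to the penalization. I would attempt to secure this through the interplay between the distinguished direction $u$ and the local geometry of $\O$ at $\ox$ (passing to a sequence $x^k\to\ox$ outside $\O$, extracting a convergent subsequence of the unit directions $w^k$, and excluding a limit transverse to $u$); making this rigorous, and pinning down the precise compatibility between $u$ and $\O$ under which the conclusion is valid for all $\tau\ge L$, is the step I expect to demand the most care.
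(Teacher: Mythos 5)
You have reduced the theorem to the single estimate $f(y)-f(x)\le L\,d_{\O}(x)$ for $y\in\Pi_\O(x)$, and you have put your finger on exactly the right difficulty: Proposition~\ref{constantdire} controls increments of $f$ only along directions in $B_\delta(u)$, whereas the projection direction $w=(y-x)/\|y-x\|$ is dictated by the geometry of $\O$ and need not lie anywhere near $u$. Unfortunately this is not a point that more care will resolve: without an additional compatibility hypothesis between $u$ and $\O$, the statement is false. Take $n=1$, $f(x)=x^{1/3}$ (the real cube root), $\O=\{0\}$, $\ox=0$. By the computation of Example~\ref{ex1} with $n=1$, the only unit vector for which \eqref{dir-lip1} holds at $0$ is $u=-1$, and there one may take $L(0,-1)=0$ because the difference quotients are nonpositive; at every $x\neq 0$ the function is $C^1$, so $f$ is directionally Lipschitzian on $\mathbb{R}$ and continuous, and $\ox=0$ trivially minimizes $f$ over $\O$. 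Yet for every $\tau>0$ and every small $\ve>0$ one has $f(-\ve)+\tau d_{\O}(-\ve)=-\ve^{1/3}+\tau\ve<0=f(0)+\tau d_{\O}(0)$, so $\ox$ is not a local minimizer of $f+\tau d_{\O}$ for any $\tau\ge L$. In this example every projection direction onto $\O$ from the relevant points is $+1=-u$, so the clustering of the directions $w^k$ around $u$ that your last paragraph hopes to extract simply does not occur; there is no mechanism in the hypotheses linking $u$ to $\O$.

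You should also know that the paper's own proof proceeds by the same transport idea (in contrapositive form) and contains the identical gap at the phrase ``Choosing further $v\in B_{\delta}(u)$ with $(x^k+tv)\in\O$'': for $t\le(1+\ve)d_{\O}(x^k)$ such a $v$ need not exist, as the example above shows (and the concluding ``$f(x^k+tv)<f(x^k)$, a contradiction'' is in any case not the inequality that contradicts minimality of $\ox$ over $\O$). So your diagnosis of where the argument must be completed is correct, but the completion cannot be carried out as stated; the theorem requires an extra hypothesis tying $u$ to the local geometry of $\O$ at $\ox$ (for instance, a hypertangency or epi-Lipschitzian condition on $\O$ in the direction $u$ guaranteeing that points near $\ox$ can be pushed into $\O$ along directions in $B_\delta(u)$ within distance comparable to $d_{\O}$), and your proof strategy would then go through essentially as you outline it.
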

\begin{proof}
Suppose on the contrary that the result is false. Then there exists a sequence $\{x^k\}$ converging to $\ox$ as $k\to\infty$ such that
\begin{align}\label{lsc}
f(x^k)+\tau d_{\Omega}(x^k)< f(\ox).
\end{align}
By the imposed closedness of $\O$, we have $d_{\Omega}(x^k)>0$ since otherwise $x^k\in\Omega$, which contradicts the fact that $\ox$ is a minimizer of $f$ over $\Omega$. The assumed continuity of $f$ implies that $f(x^k)\to f(\ox)$ as $k\to\infty$. Let $k\in\N$ be sufficiently large to ensure that the directionally Lipschitzian constant holds in the ball around $x^k$ with radius $2d_{\Omega}(x^k)$; this is possible because $d_{\Omega}(x^k)$ converges to $0$ as $k\to\infty$. Therefore, it follows from \eqref{lsc} that we can choose  $\varepsilon\in (0,1)$ such that
\begin{equation}\label{lsc1}
f(x^k)+(1+\varepsilon) \tau d_{\Omega}(x^k)< f(\ox)
\end{equation}
and then select a positive number $t$ with $t\leq (1+\varepsilon)d_{\Omega}(x^k)$. Choosing further $v\in B_{\delta}(u)$ with $(x^k+tv)\in \Omega$ tells us that
\begin{align*}
f(x^k+tv)\leq f(x^k)+t\cdot L(\ox,u) \leq f(x^k)+t\tau
\end{align*}
by $t\leq (1+\varepsilon)d_{\Omega}(x^k)$. Thus it follows from
\eqref{lsc1} that $f(x^k+tv)<f(x^k)$, a contradiction, which completes the proof of the theorem.
\end{proof}

The next proposition is useful in the proof of the main result in Section~\ref{sec3}.

\begin{proposition}\label{lemma3.1} Let $\ox$ be a Pareto optimal solution to problem \eqref{eq:mp1}. Then for all $\gg>0$ and $x\in \Omega$, we have the condition
\begin{align}\label{max-func}
\varphi_{\gg}(x):=\max\big\{\psi_i(x)- \psi_i(\ox)+\gg, \phi_i(x)\;\big|\;i=1,\ldots,m\big\}>0,
\end{align}
where $\psi_i$ and $\phi_i$ are, respectively, the components of $\Psi$ and $\Phi$ in \eqref{eq:mp1}.
\end{proposition}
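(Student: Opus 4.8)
The plan is to argue by contradiction, exploiting the Pareto optimality of $\ox$ together with the strict gap created by the parameter $\gg>0$. Suppose the conclusion fails, so that there exist $\gg>0$ and a point $x\in\Omega$ with $\varphi_{\gg}(x)\le 0$. Since $\varphi_{\gg}(x)$ is the maximum over the $2m$ quantities $\psi_i(x)-\psi_i(\ox)+\gg$ and $\phi_i(x)$, $i=1,\ldots,m$, this single inequality forces every one of these quantities to be nonpositive simultaneously; the whole argument is then a matter of unpacking what the two blocks of inequalities say.

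First I would read off the $m$ inequalities $\phi_i(x)\le 0$, which by the definition $\Phi(x)=F(x)-F(\tilde{x})$ mean precisely that $\Phi(x)\le 0$ componentwise. Combined with $x\in\Omega$, this shows $x\in\mathcal{D}$, i.e., $x$ is \emph{feasible} for the regularized problem \eqref{eq:mp1}. Next I would use the remaining $m$ inequalities $\psi_i(x)-\psi_i(\ox)+\gg\le 0$. Here the role of $\gg>0$ is crucial: rearranging gives $\psi_i(x)\le\psi_i(\ox)-\gg<\psi_i(\ox)$ for every $i$, so the nonpositivity of the maximum \emph{upgrades} to a strict componentwise inequality for the objective $\Psi$.

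To finish, I would invoke Definition~\ref{pareto} applied to problem \eqref{eq:mp1}, whose constraint set is $\mathcal{D}$ and whose objective is $\Psi=(\psi_1,\ldots,\psi_m)$. The feasible point $x\in\mathcal{D}$ satisfies $\psi_i(x)<\psi_i(\ox)$ for all $i$, hence in particular $\psi_i(x)\le\psi_i(\ox)$ for all $i$ with strict inequality for at least one index. This is exactly the situation that Pareto optimality of $\ox$ forbids, and the resulting contradiction completes the proof.

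I do not anticipate a genuine obstacle here, since the reasoning is purely a matter of unpacking the maximum. The only points that require care are keeping straight that the feasibility conclusion $x\in\mathcal{D}$ comes from the $\phi_i$-block of the maximum while the strict-dominance conclusion comes from the $\psi_i$-block, and noticing that the shift by $\gg$ is precisely what converts the weak inequality furnished by $\varphi_{\gg}(x)\le 0$ into the strict inequality needed to contradict Pareto optimality. It is also worth observing that directional Lipschitzness and the specific structure $\psi_i=f_i+\lambda\|\cdot-\tilde{x}\|^2\upsilon_i$ play no role in this particular statement; the proposition is a direct consequence of the definition of a Pareto minimizer.
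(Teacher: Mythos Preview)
Your proof is correct and follows essentially the same contradiction argument as the paper: assume $\varphi_{\gg}(x)\le 0$ for some $\gg>0$ and $x\in\Omega$, deduce $\phi_i(x)\le 0$ (so $x\in\mathcal{D}$) and $\psi_i(x)<\psi_i(\ox)$ for all $i$, and contradict the Pareto optimality of $\ox$. Your write-up is in fact slightly more explicit than the paper's in separating the roles of the two blocks of inequalities and the strict gap from $\gg$.
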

\begin{proof}
Supposing the contrary, we find $\gg_0>0$ and $x_1 \in \Omega$ such that
$\varphi_{\gg_0}(x_1)\leq 0$, which readily implies the inequalities
\begin{align*}
\psi_i(x_1)<\psi_i(x_1)+\gg_0\leq  \psi_i(\ox)\;\mbox{ and }\;
\phi_i(x_1)\leq 0\;\mbox{ for all }\; i=1,\ldots,m.
\end{align*}
This tells us that $x_1 \in \mathcal{D}$, which contradicts the Pareto optimality of  $\ox$ in \eqref{eq:mp1} and thus completes the proof of the proposition. 
\end{proof}

We conclude this section with an example of a multiobjective problem with two component functions $F=(f_1,f_2)$ one of which is directionally Lipschitzian but not locally Lipschitzian. However, Pareto optimal solutions to this problem belong to the region where $F$ is Lipschitz continuous. 

\begin{example} \label{exemplo3} Consider the mapping \( F = (f_1, f_2): \mathbb{R} \to \mathbb{R}^2 \) with the components  
\[
f_1(x): = (x+1)^2 \quad \text{and} \quad f_2(x): =
\begin{cases}
x^{\frac{1}{3}} & \text{if } x > 0, \\
x^2 + x & \text{if } x \leq 0.
\end{cases}
\]    
\noindent We obviously have that \( f_1 \) is locally Lipschitzian on the entire line $\mathbb{R}$, while \( f_2 \) is locally Lipschitzian for all \(x\neq 0\). Therefore,  \( \partial^{\infty}f_1(x) = \{0\} \) for all \( x\in\mathbb{R} \) and \( \partial^{\infty}f_2(x) = \{0\} \) for all \( x\neq 0 \). On the other hand, it follows from definition \eqref{sin-sub}  of the singular subdifferential of \(f_2\) at \(x=0\) that \( \partial^{\infty}f_2(0) = [0, \infty)\), i.e., $f_2$ is not locally Lipschitzian around $0$. It is easy to check nevertheless that $f_2$ is directionally Lipschitzian at $0$. The graphs of the functions \( f_1 \) and \( f_2 \) are shown in Figure~2. 
\begin{figure}[h]\label{fig2}
\centering
\begin{tikzpicture}
\begin{axis}[
axis lines=middle,
xlabel={$x$},
ylabel={$y$},
samples=100,
domain=-2:2,
ymin=-1.5, ymax=2.5,
width=10cm, height=7cm,
xtick={-2,-1,0,1,2},
ytick={-1,0,1,2},
legend pos=north west]
\addplot[blue, thick, domain=-2:0.5] {(x+1)^2};
\addplot[red, thick, domain=0:2] {x^(1/3)};
\addplot[red, thick, domain=-2:-0.01] {x^2 + x};
\end{axis}
\end{tikzpicture}
\caption{Graph of the functions $ f_1$ and $f_2$.}
\end{figure}
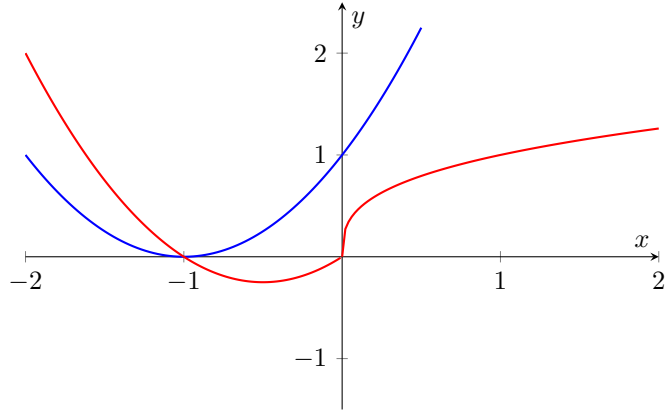

\noindent We can see that the set of Pareto optimal solutions to $F$ is  
\[
\operatorname{argmin} \{F(x): x\in\mathbb{R}\} = \left[-1, -\frac{1}{2}\right],
\]
and that the mapping \( F \) is locally Lipschitzian on this interval. 
\end{example}

\section{Necessary Conditions for Pareto Optimal Solutions}\label{sec3}

Now we are ready to establish novel necessary optimality conditions for Pareto optimal solutions to regularized multiobjective problems of type \eqref{eq:mp1} with directionally Lipschitzian components by using the limiting subgradients \eqref{lim-sub} of the functions involved. Although all the functions in \eqref{eq:mp1} are directionally Lipschitzian on their domains, they are assumed to be Lipschitz continuous around Pareto minimizers, which is the case, e.g., of the multiobjective problem in Example~\ref{exemplo3}. In Remark~\ref{non-lip}, we discuss another form of necessary conditions for Pareto optimal solutions in fully non-Lipschitzian multiobjective settings.

\begin{theorem}\label{teoCO} Let the mapping $F$ in problem \eqref{eq:mp1} be directionally Lipschitzian on $\mathbb{R}^n$  and Lipschitz continuous around a Pareto optimal solution $\ox$ to \eqref{eq:mp1}, and let $\O$ be closed. Then  there exist numbers $\tau>0$, $\alpha_i\geq 0$, and $\beta_i\geq 0$ for all $i=1,\ldots,m$  satisfying the following optimality conditions:
\begin{equation}\label{lagr}
0\in \sum_{i=1}^m(\alpha_i+\beta_i) \partial^M f_i(\ox)+\lambda\sum_{i=1}^m\alpha_i\upsilon_i(\ox-\tilde{x})+N_{\Omega}(\ox)\cap \tau\B,
\end{equation}
\begin{equation}
\nonumber\mbox{with}\quad\sum_{i=1}^m(\alpha_i+\beta_i)=1\quad\mbox{and}\quad \beta_i \big(f_i(\ox)-f_i(\tilde{x})\big)=0, \quad i=1,\ldots,m,
\end{equation}
where  $\tilde{x}$, $\lambda$, and $\Upsilon= (\upsilon_1, \ldots, \upsilon_m)$ are taken from the formulation of problem {\rm(\ref{eq:mp1})}.
\end{theorem}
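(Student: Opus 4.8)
The plan is to convert the Pareto condition \eqref{max-func} of Proposition~\ref{lemma3.1} into a scalar approximate-minimization problem, then combine Ekeland's variational principle, the exact penalization of Theorem~\ref{minindirectionally}, and the subdifferential calculus of Section~\ref{prel}, finishing with a limiting passage as the accuracy tends to zero. Write $\psi_i(x)=f_i(x)+\lambda\|x-\tilde{x}\|^2\upsilon_i$ and $\phi_i(x)=f_i(x)-f_i(\tilde{x})$ for the components of $\Psi$ and $\Phi$ in \eqref{eq:mp1}, and for each $\gamma>0$ let $\varphi_\gamma$ be the max function in \eqref{max-func}. First I would note that $\varphi_\gamma(\ox)=\gamma$, since $\psi_i(\ox)-\psi_i(\ox)+\gamma=\gamma$ while $\phi_i(\ox)=f_i(\ox)-f_i(\tilde{x})\le0<\gamma$ because $\ox\in\mathcal{D}$. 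Together with the strict positivity $\varphi_\gamma>0$ on $\Omega$ from Proposition~\ref{lemma3.1}, this gives $\varphi_\gamma(\ox)\le\inf_{\Omega}\varphi_\gamma+\gamma$, so $\ox$ is a $\gamma$-approximate minimizer of the continuous function $\varphi_\gamma$ over the closed set $\Omega$.

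Next I would apply Ekeland's variational principle to $\varphi_\gamma$ on $\Omega$ with accuracy $\gamma$ and radius $\sqrt\gamma$, obtaining $x_\gamma\in\Omega$ with $\|x_\gamma-\ox\|\le\sqrt\gamma$ that minimizes $h_\gamma(x):=\varphi_\gamma(x)+\sqrt\gamma\,\|x-x_\gamma\|$ over $\Omega$. Because $F$ is Lipschitz continuous around $\ox$ and $x_\gamma\to\ox$, the functions $\psi_i$, $\phi_i$, and hence $h_\gamma$, are locally Lipschitzian, hence directionally Lipschitzian, near $x_\gamma$, with constants bounded uniformly in $\gamma$; Theorem~\ref{minindirectionally} then makes $x_\gamma$ a local minimizer of $h_\gamma+\tau_\gamma d_{\Omega}$ for some $\tau_\gamma$ lying in a fixed positive bounded range. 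The generalized Fermat rule gives $0\in\partial^M(h_\gamma+\tau_\gamma d_{\Omega})(x_\gamma)$, and since every summand is locally Lipschitzian the qualification condition \eqref{eq:qualification_condition} holds automatically, so the sum rule of Proposition~\ref{cor:subgradients_sum}, together with $\partial^M\|\cdot-x_\gamma\|(x_\gamma)=\B$ and formula \eqref{sub-dist}, yields
\[
0\in\partial^M\varphi_\gamma(x_\gamma)+\sqrt\gamma\,\B+\tau_\gamma\big(N_{\Omega}(x_\gamma)\cap\B\big).
\]
Applying the max rule and then the sum rule to $\partial^M\varphi_\gamma(x_\gamma)$, and denoting by $\alpha_i^\gamma\ge0$ and $\beta_i^\gamma\ge0$ the multipliers of the active members of the families $\{\psi_i(\cdot)-\psi_i(\ox)+\gamma\}$ and $\{\phi_i\}$ with $\sum_i(\alpha_i^\gamma+\beta_i^\gamma)=1$, I obtain subgradients $a_i^\gamma,b_i^\gamma\in\partial^M f_i(x_\gamma)$ and, from the smooth part of $\psi_i$, the gradients $2\lambda\upsilon_i(x_\gamma-\tilde{x})$, so that $0$ belongs to $\sum_i(\alpha_i^\gamma a_i^\gamma+\beta_i^\gamma b_i^\gamma)+2\lambda\sum_i\alpha_i^\gamma\upsilon_i(x_\gamma-\tilde{x})+\sqrt\gamma\,\B+\tau_\gamma(N_{\Omega}(x_\gamma)\cap\B)$, the total multiplier on $\partial^M f_i$ being $\alpha_i^\gamma+\beta_i^\gamma$.

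Finally I would let $\gamma\downarrow0$. Then $x_\gamma\to\ox$ with $f_i(x_\gamma)\to f_i(\ox)$; the bounded multipliers $\alpha_i^\gamma,\beta_i^\gamma\in[0,1]$, the bounded penalties $\tau_\gamma$, the subgradients $a_i^\gamma,b_i^\gamma$ (bounded by the local Lipschitz constant), and the unit normals all admit convergent subsequences, to limits $\alpha_i,\beta_i$, $\tau>0$, $a_i,b_i$, and $n$, with $\sum_i(\alpha_i+\beta_i)=1$. The term $\sqrt\gamma\,\B$ vanishes, the robustness of the limiting subdifferential (Proposition~\ref{weakgraph}) gives $a_i,b_i\in\partial^M f_i(\ox)$, and the closedness of the limiting normal cone gives $n\in N_{\Omega}(\ox)\cap\B$; since $\alpha_i a_i+\beta_i b_i$ carries total coefficient $\alpha_i+\beta_i$ on $\partial^M f_i(\ox)$, this produces the inclusion \eqref{lagr}, the proximal contribution $\lambda\upsilon_i(\ox-\tilde{x})$ appearing up to the constant factor generated by differentiating $\|x-\tilde{x}\|^2$. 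For the complementary slackness, an active-set argument finishes the proof: if $f_i(\ox)-f_i(\tilde{x})<0$, then by continuity $\phi_i(x_\gamma)<0<\varphi_\gamma(x_\gamma)$ for all small $\gamma$, so $\phi_i$ is inactive at $x_\gamma$, forcing $\beta_i^\gamma=0$ and hence $\beta_i=0$; therefore $\beta_i\big(f_i(\ox)-f_i(\tilde{x})\big)=0$ in every case.

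The step I expect to be most delicate is the simultaneous limiting passage: one must keep $\tau_\gamma$ uniformly bounded so that $N_{\Omega}(\ox)\cap\tau\B$ is a genuine bounded term, invoke robustness to preserve membership in $\partial^M f_i(\ox)$ and in $N_{\Omega}(\ox)$ under the joint convergence of points, function values, and subgradients, and align the active-set bookkeeping with the required complementary slackness. A further point needing care is the nonconvexity of $\partial^M f_i$: the two active families must be amalgamated into the single coefficient $\alpha_i+\beta_i$ multiplying $\partial^M f_i(\ox)$ in \eqref{lagr}, which is why the subgradients $a_i,b_i$ and their multipliers are tracked separately throughout and only combined in the final inclusion.
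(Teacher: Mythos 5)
Your proposal follows the same route as the paper's proof: the scalarization $\varphi_\gamma$ from Proposition~\ref{lemma3.1}, Ekeland's variational principle producing approximate minimizers $x_\gamma\to\ox$, exact penalization via Theorem~\ref{minindirectionally}, the generalized Fermat rule combined with the sum rule of Proposition~\ref{cor:subgradients_sum} and the max rule, and a limiting passage based on robustness (Proposition~\ref{weakgraph}) and formula \eqref{sub-dist}. You are in fact more careful than the paper on two minor points: the factor $2$ coming from $\nabla\|x-\tilde{x}\|^2$ (which the paper silently absorbs into $\lambda$) and the explicit active-set argument for the complementary slackness $\beta_i\big(f_i(\ox)-f_i(\tilde{x})\big)=0$.

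There is, however, one step that as written would not deliver \eqref{lagr}. After the max rule you retain two subgradients $a_i^\gamma,b_i^\gamma\in\partial^M f_i(x_\gamma)$, one generated by the family $\{\psi_i(\cdot)-\psi_i(\ox)+\gamma\}$ and one by $\{\phi_i\}$, and propose to ``amalgamate'' $\alpha_i^\gamma a_i^\gamma+\beta_i^\gamma b_i^\gamma$ into the single term $(\alpha_i^\gamma+\beta_i^\gamma)\,\partial^M f_i$ only at the end. Since $\partial^M f_i$ is nonconvex, $\alpha_i a_i+\beta_i b_i$ with $a_i\neq b_i$ lies in general only in $(\alpha_i+\beta_i)\co\partial^M f_i(\ox)$ and not in $(\alpha_i+\beta_i)\,\partial^M f_i(\ox)$; carried out literally, your bookkeeping yields a weaker, Clarke-type inclusion. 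The paper avoids this by invoking the max rule in the form $\partial^M\big(\max_j g_j\big)(x)\subset\bigcup\partial^M\big(\sum_j\lm_j g_j\big)(x)$, the union taken over admissible multipliers, i.e., the subdifferential of the convex combination rather than the convex combination of subdifferentials. Inside that combination the two occurrences of $f_i$ collapse at the level of functions, $\alpha_i\psi_i+\beta_i\phi_i=(\alpha_i+\beta_i)f_i+\lambda\alpha_i\upsilon_i\|\cdot-\tilde{x}\|^2+\mathrm{const}$, and only afterwards is the sum rule applied, producing a single subgradient $u_i^\gamma\in\partial^M f_i(x_\gamma)$ carrying the coefficient $\alpha_i^\gamma+\beta_i^\gamma$. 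Reordering your argument in this way closes the gap; the remainder of your limiting passage is sound.
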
 
\begin{proof}
Fix a Pareto optimal solution $\ox$ to \eqref{eq:mp1} and pick a sequence of positive numbers $\gg_l\dn 0$ as $l\to\infty$. It follows from Proposition~\ref{lemma3.1} that
\begin{align}\label{pos}
\varphi_{\gg_l}(x)>0\;
\mbox{ whenever }\; l\in \mathbb{N}\;\mbox{ and }\;x\in \Omega
\end{align}
for the maximum function \eqref{max-func}. It follows from Proposition~\ref{max} that the function  $\varphi_{\gg_l}$ is directionally Lipschitzian on $\mathbb{R}^n$. We can also see that this function is locally Lipschitzian around $\ox$. Observe furthermore that for $0<\gg_l\leq 1$, the function $\ph_\gg(x)$ is nonnegative on $\O$, i.e., it is bounded from below on this set. Using \eqref{pos} and $\ox\in\mathcal{D}$ yields $\displaystyle\inf_{x\in \Omega} \varphi_{\gg_l}(x)\geq 0$, which tells us in turn that
\begin{align}
\nonumber \gg_l=\varphi_{\gg_l}(\ox)\leq \inf_{x\in \Omega} \varphi_{\gg_l}(x)+\gg_l.
\end{align}
Now we are in a position to apply the fundamental Ekeland variational principle (see, e.g., \cite[Theorem~2.26]{Morduk}) to the extended-real-valued l.s.c.\
function
\begin{equation*}
\Tilde\varphi_{\gg_l}(x):=\varphi_{\gg_l}(x)+\dd_{\O}(x),\quad x\in\O,
\end{equation*}
where $\dd_\O$ stands for the indicator function of $\O$ that equals 0 for $x\in\O$ and $\infty$ otherwise. This allows us to find $z^l\in\Omega$ such that
\begin{align}
\|\ox-z^l\|\leq \sqrt{\gg_l}\;\mbox{ and}\label{eq2.8}
\end{align}
\begin{align}
\nonumber \varphi_{\gg_l}(x)+\sqrt{\gg_l}\|x-z^l\|\geq \varphi_{\gg_l}(z^l)\;\mbox{  for all }\;x \in \Omega.
\end{align}
The latter mean that $z^l$ is a global solution to the constrained optimization problem
\begin{equation}\label{cost}
\min\;\varphi_{\gg_l}(x)+\sqrt{\gg_l}\|x-z^l\|,\quad  x\in\Omega.
\end{equation}
Note that the cost function in \eqref{cost} is directionally Lipschitzian by Proposition~\ref{max}(i) due to this property of $\varphi_{\gg_l}$ and the Lipschitz continuity of $\sqrt{\gg_l}\|\cdot-z^l\|$. Denoting by $L(z^l,u)$ a directionally Lipschitz constant of $\varphi_{\gg_l}(\cdot)+\sqrt{\gg_l}\|\cdot-z^l\|$ and employing Theorem~\ref{minindirectionally} ensure, for $l$ sufficiently large, that $z^l$ is a local minimizer of the unconstrained problem
\begin{align}\label{min-penal}
\min\,\chi_{\gg_l}(x):= \varphi_{\gg_l}(x)+\sqrt{\gg_l}\|x-z^l\|+\tau d_{\Omega}(x)
\end{align}
whenever $\tau\geq L(z^l,u)$. Thus $0 \in\partial^M \chi_{\gg_l}(z^l)$ by the generalized Fermat rule from \cite[Proposition~1.114]{Morduk}. Observing that, for $l$ sufficiently large, all the functions  in \eqref{min-penal} are locally Lipschitzian around $z^l$ allows us to apply the subdifferential sum rule from Proposition~\ref{cor:subgradients_sum}, where the qualification condition \eqref{eq:qualification_condition} holds automatically. We arrive in this way at 
\begin{align}
\nonumber 
0\subset\partial^M\varphi_{\gg_l}(z^l)+\sqrt{\gg_l}\partial^M(\|\cdot-z^l\|)(z^l)+\tau\partial^Md_{\Omega}(z^l).
\end{align}
Therefore, for each $l$ sufficiently large there exists $\omega^l\in\partial^M\varphi_{\gg_l}(z^l)$ satisfying the inclusion
\begin{align}
0 \in \omega^l+\sqrt{\gg_l}\partial^M(\|\cdot-z^l\|)(z^l)+\tau\partial^Md_{\Omega}(z^l).\label{eq2.9}
\end{align}
Applying to $\omega^l\in\partial^M\varphi_{\gg_l}(z^l)$ the subdifferentiation of the maximum function from \cite[Theorem~3.46(ii)]{Morduk}, we find
$\alpha^l\in \mathbb{R}^m_{+}$ and $\beta^l\in \mathbb{R}^m_{+}$ such that
\begin{equation}\label{eq2.10}
\sum_{i\in \mathcal{I}}(\alpha^l_i+\beta^l_i)=1,
\end{equation}
\begin{equation}\label{eq2.10a}
\begin{array}{ll}
&(\alpha^l_i+\beta^l_i)\big[(\psi(z^l)-\psi(\ox)+\gg_l) -\varphi_{\gg_l}(z^l)\big] =0~\mbox{and}\\
&(\alpha^l_i+\beta^l_i)\big[\phi(z^l)-\varphi_{\gg_l}(z^l)\big] =0\;\mbox{ for all }\;i=1,\ldots,m,
\end{array}
\end{equation}
\begin{equation}\label{eq2.11}
\omega^l \in \sum_{i=1}^m(\alpha_i^l+\beta_i^l) \partial^M f_i(z^l)+\sum_{i=1}^m\lambda\alpha_i^l\upsilon_i (z^l-\tilde{x}).
\end{equation}
Combining (\ref{eq2.9}) and (\ref{eq2.11}) leads us to the inclusion
\begin{equation*}
\begin{array}{ll}
0\in\disp\sum_{i=1}^m(\alpha_i^l+\beta_i^l)\partial^M f_i(z^l)+\disp\sum_{i=1}^m\lambda\alpha_i^l\upsilon_i (z^l-\tilde{x})\\
\qquad\displaystyle+\sqrt{\gg_l}\partial^M(\|\cdot-z^l\|)+\tau\partial^Md_{\Omega}(z^l).
\end{array}
\end{equation*}
In this way, we find sequences $\{u_i^l\}, \,  \{\tilde{u}^l\}$, and $ \{\tilde{v}^l\}$ with 
$$
u_i^l\in  \partial^M f_i(z^l),\;\tilde{u}^l\in \sqrt{\gg_l}\partial^M(\|\cdot-z^l\|)~\mbox{and}~\tilde{v}^l\in\tau\partial^Md_{\Omega}(z^l)
$$
satisfying the following equality for all $l\in\N$ sufficiently large:
\begin{align}
0= \sum_{i=1}^m(\alpha_i^l+\beta_i^l) u_i^l+\sum_{i=1}^m\lambda\alpha_i^l\upsilon_i (z^l-\tilde{x})+ \tilde{u}^l+ \tilde{v}^l.\label{eq2.12}
\end{align}
By (\ref{eq2.8}) and (\ref{eq2.10}), the sequences $\{\alpha_i^l\}$, $\{\beta_i^l\}$ as $i=1,\ldots,m$, and $\{z^l\}$ are bounded. Furthermore, the boundedness of the limiting subgradients of $\|\cdot-z^l\|$ and $d_{\Omega}(z^l)$ follows from the local Lipschitz continuity of these functions; see \cite[Corollary~1.81]{Morduk}. The boundedness of $\{u_i^l\}$ as $i=1,\ldots,m$ is due to the local Lipschitz continuity of $F$ around $\ox$ combined with the fact that $z^l \to \ox$ as $l\to\infty$. Passing to subsequences if necessary, we get that the sequences $\{\alpha_i^l\}$, $\{\beta_i^l\}$, $\{u_i^l\}$, $\{\tilde{u}^l\}$, and $\{\tilde{v}^l\}$ converge to $\alpha_i$, $\beta_i$, $u_i$, $ \tilde{u}$, and $ \tilde{v}$, respectively, where $\tilde{u}=0$ by $\gg_l \to 0$ as $l\to\infty$. These yield:
\begin{itemize}
\item $\alpha_i,\;\beta_i\geq 0$ for all $i=1,\ldots,m$ $\mbox{ and }\;$ $\disp\sum_{i=1}^m(\alpha_i+\beta_i)=1$ by \eqref{eq2.10}.

\item From \eqref{eq2.10a}, we obtain  $(\alpha_i+\beta_i)(\max\{0,f_i(\ox)-f_i(\tilde{x})\})=0$ and $$(\alpha_i+\beta_i)\big[(f_i(\ox)-f_i({\tilde{x}))-\max\{0,f_i(\ox)-f_i(\tilde{x}})\}\big]=0\;\mbox{ for all }\;i=1,\ldots,m.$$
Therefore, $(\alpha_i+\beta_i)(f_i(\ox)-f_i(\tilde{x}))=0$ and so $\beta_i(f_i(\ox)-f_i(\tilde{x}))=0$.

\item It follows from (\ref{eq2.12}) that
\begin{align}
\nonumber 0=\sum_{i=1}^m(\alpha_i+\beta_i)u_i+\lambda\sum_{i=1}^m\alpha_i\upsilon_i (\ox-\tilde{x}) +\tilde{v}.
\end{align}
The robustness property of Proposition~\ref{weakgraph} ensures that $u_i \in \partial^Mf_i(\ox)$ and $\tilde{v} \in \partial^Md_{\Omega}(\ox)$, which gives us the inclusion
\begin{align}
\nonumber 0\in\sum_{i\in \mathcal{I}}(\alpha_i+\beta_i) \partial^M f_i(\ox)+\lambda\sum_{i\in \mathcal{I}}\alpha_i\upsilon_i (\ox-\tilde{x})+\tau\partial^M d_{\Omega}(\ox)\;\mbox{ with some }\;\tau>0.
\end{align}
\end{itemize}
To verify \eqref{lagr}, it remains to use the subdifferential calculation \eqref{sub-dist} for the distance function $d_\O$ at $\ox\in\O$. This completes the proof of the theorem.
\end{proof}

\begin{remark}\label{non-lip}
{\rm It follows from the proof of Theorem~\ref{teoCO} that the Lipschitz continuity of the cost mapping $F=(f_1,\ldots,f_m)$ around the reference Pareto minimizer plays a significant role. This is due to the characterization $\partial^\infty f(\ox)=\{0\}$ of the local Lipschitz property, which supports the subgradient boundedness needed for passing to the limit. On the other hand, the {\em local Lipschitz continuity} can be {\em avoided} if the {\em limiting normal cone} \eqref{nor} to the epigraphs of $f_i$ is used instead of the subdifferential \eqref{lim-sub} of the components $f_i$; cf. \cite[Theorem~6.5]{mordukhovich2018variational} for the case of nondifferentiable programs with inequality constraints. Such a derivation is based on the {\em extremal principle} of variational analysis; see \cite{Morduk,Mordukh,mordukhovich2018variational}}.
\end{remark}

\section{Concluding Remarks and Future Research}\label{conc}

This papers investigates Pareto optimal solutions to a class of regularized multiobjective optimization problems with directionally Lipschitzian data subject to geometric constraints. Necessary optimality conditions are derived for such problems in terms of Mordukhovich's limiting subdifferential and normal cone provided that the Pareto minimizer in question belongs to the region where the vectorial cost mapping is locally Lipschitzian. 

Our future research pursues several issues. First we plan to consider Pareto solutions to fully non-Lipschitzian multiobjective optimization problems and derive for them necessary optimality conditions expressed in terms of limiting normals to epigraphs of the functions involved. Then we intend to study other regularization types, where the proximal term (squared norm) in \eqref{eq:mp1} is replaced by some nonsmooth regularizators. They include the nonsmooth convex $\ell^1$-norm regularizators typical in problems of machine learning and statistics (like the Lasso problem \cite{lasso}) and nonsmooth nonconvex $\ell^0$-norm as in models of cancer research \cite{proton}.

\begin{acknowledgement}
{\rm Research of Boris S. Mordukhovich was partly supported by the US National Science Foundation under grant DMS-2204519 and by the Australian Research Council under Discovery Project DP250101112, the research of Glaydston de C. Bento was supported in part by CNPq grants 314106/2020-0, the research of Jo\~ao Xavier da Cruz Neto was partly supported by CNPq grants 302156/2022-4 and the research of Jurandir de Oliveira Lopes was partly supported by CNPq grants 305415/2025-5.}
\end{acknowledgement}

\end{document}